\title[Nonlocal Kirchhoff equations with lack of compactness]{Nonlocal Kirchhoff superlinear equations with  indefinite nonlinearity and lack of compactness}
\author[L. Li, V. R\u{a}dulescu, and D. Repov\v{s}]{Lin Li, Vicen\c{t}iu R\u{a}dulescu, and Du\v{s}an Repov\v{s}}
\address[L. Li]{School of Mathematics and Statistics, Chongqing Technology and Business University,  400067 Chongqing, P.R. China}
\email{lilin420@gmail.com}
\address[V. R\u{a}dulescu]{Institute of Mathematics, Physics and Mechanics, 1000 Ljubljana, Slovenia
\&  Department of Mathematics, University of Craiova,  200585 Craiova,
Romania. } \email{vicentiu.radulescu@imfm.si}
\address[D. Repov\v{s}]{Faculty of Education, and Faculty of Mathematics and Physics,
University of Ljublja\-na
\&
Institute of Mathematics, Physics and Mechanics, 1000 Ljubljana, Slovenia}\email{dusan.repovs@guest.arnes.si}
\keywords{Kirchhoff equations; variational methods; mountain pass; Ekeland's variational principle.}
\thanks{{\em 2010 Mathematics Subject Classification.} Primary: 35J60. Secondary:35J50, 35J20.}
\newcommand{\RR}{\mathbb{R}}
\theoremstyle{plain}
\newtheorem{theorem}{Theorem}[section]
\newtheorem{lemma}[theorem]{Lemma}
\theoremstyle{definition}
\newtheorem{remark}[theorem]{Remark}
\begin{document}


\begin{abstract}
We study the following Kirchhoff equation
\begin{equation*}
- \left( 1 + b \int_{\RR^3} |\nabla u|^2 dx \right) \Delta u + V(x) u = f(x,u), \quad x \in \RR^3.
\end{equation*}
A feature of this paper is that the nonlinearity $f$ and the potential $V$ are indefinite, hence sign-changing. Under some appropriate assumptions on $V$ and $f$, we prove the existence of two different solutions of the equation via the Ekeland variational principle and the mountain pass theorem.
\end{abstract}
\maketitle

\section{Introduction}
In this paper we consider the following Kirchhoff equation
\begin{equation}\label{prob}\tag{$\mathcal{K}$}
- \left( 1 + b \int_{\RR^3} |\nabla u|^2 dx \right) \Delta u + V(x) u = f(x,u), \quad x \in \RR^3,
\end{equation}
where $b $ is a positive constant, the potential $V$ and the nonlinearity $f$ are allowed to be sign-changing.

Equation \eqref{prob} is a modified version of the classical Kirchhoff equation, which has a strong physical meaning. Problem \eqref{prob} is related to the stationary analogue of the Kirchhoff equation
\begin{equation}\label{eq2}
u_{tt}-\left(1 + b \int_{\Omega}|\nabla_x u|^2dx \right) \Delta_x u=g(x,u)
\end{equation}
which was proposed by Kirchhoff \cite{K} in 1883.
The early classical studies of the Kirchhoff equation were made by Bernstein \cite{B} and Poho\~{z}aev \cite{P}. However, Equation \eqref{eq2} received great attention only after  Lions \cite{L} proposed an abstract framework for the problem.

 The Kirchhoff equation is a generalization of the d'Alembert wave equation
\begin{equation*}
    \rho \frac{\partial ^2u}{\partial t^2}-\left(\frac{P_0}{h} + \frac{E}{2L}\int_0^L\left|\frac{\partial u}{\partial x}\right|^2dx\right)\frac{\partial ^2u}{\partial x^2}=g(x,u)
\end{equation*}
for free vibrations of the elastic string. Kirchhoff's model takes into account the changes in the length of the string produced by transverse vibrations. Here, $L$ is the length of the string, $h$ is the area of its cross section, $E$ is the Young modulus of the material, $\rho$ is the mass density and $P_0$ is the initial tension.  It was pointed out in \cite{alves} that Eq. \eqref{eq2} models various physical phenomena, where $u$ describes a process that depends on the average of itself. Nonlocal effects also arise in the description of biological systems. A parabolic version of problem \eqref{eq2} can be used to describe the growth and movement of some species. In this case, the integral term models the movement, which is assumed to be dependent on the energy of the
entire system with the unknown $u$
being its population density.

 We focus on the Euclidean space 3-space with lack of compactness, since the Sobolev embedding is not compact for the whole space. A natural idea is study this equation on the radial space. Interested reader can consult the references \cite{MR2651702,MR2825259,MR2951740,MR2946973,MR3359229,MR3128653}. Recently, Wu \cite{MR2736309} studied this type of equations with positive coercive potential $V$. Four new existence results for nontrivial solutions and a sequence of high
energy solutions for problem \eqref{prob} were obtained by using a symmetric mountain pass theorem. Actually, coercive potential $V$ was introduced by Rabinowitz \cite{MR1162728} (see also \cite{gazrad}) to overcome the lack of compact Sobolev embedding. Later, many authors \cite{MR2914487,MR3053562,MR3004514,MR3341384,MR3091217,MR3135452,MR3169022,MR3162387,MR3239677,MR3372045,MR3316612,MR3365848} used this type of potential.
Very recently, the case when the potential $V$ vanishes at some points has also been considered \cite{MR3158465,MR3191413,MR3334964,MR3347399,MR3365866}. We also refer to the related papers \cite{ore1,molrep,ore2,pucci1,pucci2} and the monograph \cite{molrads}, which deals with variational methods for nonlocal fractional equations.

In some of the aforementioned references, the potential $V$ is always assumed to be positive or vanishing at infinity. The following technical Ambrosetti--Rabinowitz  condition ((AR) for short) is usually required.
\begin{itemize}
\item[(AR)] There exists $\mu > 4$ such that $$ 0 < \mu F(x,u) \leq u f(x,u), \quad u \not=0. $$
\end{itemize}

 The role of (AR) is to ensure the boundedness of the Palais--Smale (PS) sequences of the energy
functional, which is  crucial in applying the critical point theory.

Motivated by the works \cite{MR3011282,MR3283380},  we consider in this paper another case, namely that of $f$ being superlinear, that is, $f ( x , u )/ u \to +\infty $ as $u \to \infty $. Furthermore, the potential $V$ and the primitive of $f$ are also allowed to be sign-changing, which is quite different
from the previous results. Before stating our main results, we list the following assumption on $V ( x ) $.
\begin{itemize}
\item[(V1)] $V \in C(\RR^3 , \RR)$ and $\inf_{x \in \RR^3} V(x) > -\infty$. Moreover, there exists a constant $d_0 > 0$ such that for any $M > 0$, $$ \lim_{|y|\to\infty} \text{meas} \left( x \in \RR^3 : |x-y|\leq d_0, V(x) \leq M \right) = 0, $$ where meas $(\cdot)$ denotes the Lebesgue measure in $\RR^3 $.
\end{itemize}

Inspired by Zhang and Xu \cite{MR2769179}, we can find a constant $V_0 > 0$ such that $\tilde{V}(x) := V ( x ) + V_0 \geq 1$ for all $x \in \RR^3$ and let $\tilde{f} ( x , u ) := f ( x , u ) + V_0 u $, $\forall( x , u ) \in \RR^3 \times \RR $. Then it is easy to verify the following lemma.
\begin{lemma}\label{lemma1.1}
Equation \eqref{prob} is equivalent to the following problem
\begin{equation}\label{kgm}\tag{$\mathcal{K'}$}
- \left( 1 + b \int_{\RR^3} |\nabla u|^2 dx \right) \Delta u + \tilde{V}(x) u = \tilde{f}(x,u), \quad x \in \RR^3.
\end{equation}
\end{lemma}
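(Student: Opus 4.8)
The plan is to unwind the definitions and observe that passing from \eqref{prob} to \eqref{kgm} is nothing more than adding the term $V_0 u$ to both sides of the equation. First I would record the one nontrivial input, namely assumption (V1): since $\inf_{x\in\RR^3} V(x) =: -c_0 > -\infty$, any choice $V_0 \geq c_0 + 1$ yields $\tilde V(x) = V(x) + V_0 \geq 1$ for all $x \in \RR^3$. This is the sole reason for performing the substitution — it produces the ``good'' reformulation \eqref{kgm} on which the working space $E := \{u \in H^1(\RR^3) : \int_{\RR^3} \tilde V(x) u^2\,dx < \infty\}$, equipped with $\norm{u}^2 = \int_{\RR^3}(\abs{\nabla u}^2 + \tilde V(x) u^2)\,dx$, is well defined and the variational machinery of the later sections can be set up.

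Next I would make precise the sense in which the two problems are ``equivalent''. Writing the weak formulation of \eqref{prob}, a function $u$ (with $\int_{\RR^3} V u^2\,dx < \infty$) is a weak solution iff
\[
\left(1 + b\int_{\RR^3}\abs{\nabla u}^2\,dx\right)\int_{\RR^3}\nabla u\cdot\nabla\varphi\,dx + \int_{\RR^3} V(x) u\varphi\,dx = \int_{\RR^3} f(x,u)\varphi\,dx
\]
for all admissible test functions $\varphi$. Adding $V_0\int_{\RR^3} u\varphi\,dx$ to both sides and inserting $\tilde V = V + V_0$ and $\tilde f(x,u) = f(x,u) + V_0 u$ converts this identity into exactly the weak formulation of \eqref{kgm}, and the implication is reversible. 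Hence $u$ solves \eqref{prob} if and only if it solves \eqref{kgm}; the identical one-line manipulation works pointwise for classical solutions as well. One should also check that the underlying function spaces coincide, i.e. $\int_{\RR^3} V u^2\,dx < \infty \iff \int_{\RR^3} \tilde V u^2\,dx < \infty$, which is immediate because $V$ and $\tilde V$ differ by the constant $V_0$ and we only ever consider $u \in H^1(\RR^3) \subset L^2(\RR^3)$.

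For completeness I would remark that the associated energy functionals are in fact the same: with $\tilde F(x,u) = \int_0^u \tilde f(x,s)\,ds = F(x,u) + \tfrac{V_0}{2}u^2$, the potential and nonlinear contributions satisfy $\tfrac12\int_{\RR^3}\tilde V u^2\,dx - \int_{\RR^3}\tilde F(x,u)\,dx = \tfrac12\int_{\RR^3} V u^2\,dx - \int_{\RR^3} F(x,u)\,dx$, the $V_0$-terms cancelling, so only the splitting into a coercive quadratic part and a remaining nonlinear part changes, not the functional itself. There is essentially no obstacle in this lemma; the only point requiring a word of care is precisely this matching of spaces and the verification that $\tilde V \geq 1$, both of which follow directly from (V1).
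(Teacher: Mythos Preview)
Your proposal is correct; in fact the paper does not give a proof of this lemma at all, merely remarking that ``it is easy to verify'' after introducing $\tilde V = V + V_0$ and $\tilde f(x,u) = f(x,u) + V_0 u$. Your argument supplies exactly the obvious justification the authors omit --- adding $V_0 u$ to both sides --- together with the ancillary checks (matching of function spaces, invariance of the energy functional) that make the equivalence rigorous in the variational setting used later.
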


In what follows, we let $\mu > 4$ and impose some assumptions on $\tilde{f}$ and its primitive $\tilde{F}$ as follows:
\begin{itemize}
\item[(S1)] $\tilde{f} \in C ( \RR^3 \times \RR , \RR ) $, and there exist constants $c_1 $, $c_2 > 0$ and $q \in ( 4 , 6 )$ such that $$ |\tilde{f}(x,u)| \leq c_1|u|^3 + c_2|u|^{q-1}. $$
\item[(S2)] $\lim_{|u|\to\infty} \frac{|\tilde{F}(x,u)|}{|u|^4} = \infty$ a.e. $x \in \RR^3$ and there exist constants $c_3\geq 0$, $r_0\geq 0$ and $\tau \in (0,2)$ such that $$ \inf_{x\in\RR^3} \tilde{F}(x,u) \geq c_3 |u|^\tau \geq 0, \quad \forall (x,u)\in \RR^3\times\RR,\, |u|\geq r_0, $$ where (and in the sequel) $\tilde{F}(x,u) = \int_0^u \tilde{f} (x,s)ds$.
\item[(S3)] $\tilde{\mathcal{F}}(x,u):=\frac{1}{4}u\tilde{f}(x,u)-\tilde{F}(x,u)\geq 0$, and there exist $c_4 > 0$ and $\kappa > 1$ such that $$ |\tilde{F}(x,u)|^\kappa \leq c_4 |u|^{2\kappa} \tilde{\mathcal{F}}(x,u), \quad \forall (x,u)\in \RR^3\times\RR,\, |u|\geq r_0. $$
\end{itemize}

Now we state our main result as follows.
\begin{theorem}\label{thm1.1}
Suppose that conditions (V1), (S1), (S2) and (S3) are satisfied. Then problem \eqref{prob} has at least two different solutions.
\end{theorem}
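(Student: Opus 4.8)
The plan is to pass to the equivalent problem \eqref{kgm} furnished by Lemma~\ref{lemma1.1}, so that one works with the shifted potential $\tilde V\geq 1$. First I would fix the functional framework: put
$$E:=\Big\{u\in H^1(\RR^3):\int_{\RR^3}\tilde V(x)u^2\,dx<\infty\Big\},\qquad \|u\|^2:=\int_{\RR^3}\big(|\nabla u|^2+\tilde V(x)u^2\big)\,dx.$$
Since $\tilde V\geq 1$ one has $E\hookrightarrow H^1(\RR^3)\hookrightarrow L^s(\RR^3)$ continuously for $s\in[2,6]$, and the decisive point — this is where (V1) enters and the lack of compactness is repaired — is that condition (V1) upgrades this to a \emph{compact} embedding $E\hookrightarrow L^s(\RR^3)$ for every $s\in[2,6)$; this is by now standard. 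By (S1) the energy functional
$$I(u)=\tfrac12\|u\|^2+\tfrac b4\Big(\int_{\RR^3}|\nabla u|^2\,dx\Big)^2-\int_{\RR^3}\tilde F(x,u)\,dx$$
is of class $C^1$ on $E$, and its critical points are exactly the weak solutions of \eqref{kgm}, hence of \eqref{prob}.

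Next I would check the geometry of $I$. From (S1) and Sobolev, $\big|\int_{\RR^3}\tilde F(x,u)\,dx\big|\leq C_1\|u\|^4+C_2\|u\|^q$, so $I(u)\geq\tfrac12\|u\|^2-C_1\|u\|^4-C_2\|u\|^q$; since $4,q>2$ there are $\rho,\alpha>0$ with $I\geq\alpha$ on $\{\|u\|=\rho\}$ while $I(0)=0$, so $0$ is a strict local minimum. Fixing $0\ne\varphi\in C_c^\infty(\RR^3)$, the super-quartic growth $|\tilde F(x,u)|/|u|^4\to\infty$ from (S2), together with the uniform lower bound for $\tilde F$ in (S2) and the bound $|\tilde F|\leq C$ on $\RR^3\times[-r_0,r_0]$ coming from (S1), gives by Fatou's lemma $t^{-4}I(t\varphi)\to-\infty$; hence $I(t\varphi)\to-\infty$ and there is $e$ with $\|e\|>\rho$, $I(e)<0$. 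So $I$ has the mountain pass geometry, with minimax level $c\geq\alpha>0$.

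The heart of the argument — and the step I expect to be the main obstacle — is the compactness of $I$: I would verify the Cerami condition $(C)_c$. For a Cerami sequence $\{u_n\}$ (i.e.\ $I(u_n)\to c$, $(1+\|u_n\|)\|I'(u_n)\|\to0$), the Kirchhoff scaling gives
$$I(u_n)-\tfrac14\langle I'(u_n),u_n\rangle=\tfrac14\|u_n\|^2+\int_{\RR^3}\tilde{\mathcal F}(x,u_n)\,dx.$$
Suppose $\|u_n\|\to\infty$ and set $v_n=u_n/\|u_n\|$; up to a subsequence $v_n\rightharpoonup v$ in $E$ and, by the compact embedding, $v_n\to v$ in $L^s$ for $s\in[2,6)$ and a.e.\ in $\RR^3$. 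If $v\ne0$, then $|u_n(x)|\to\infty$ on a set of positive measure and (S2) forces $\|u_n\|^{-4}\int_{\RR^3}\tilde F(x,u_n)\,dx\to+\infty$, contradicting $\|u_n\|^{-4}I(u_n)\to0$. If $v=0$, I would use the monotonicity trick of Jeanjean together with (S3): taking $t_n\in[0,1]$ that maximizes $t\mapsto I(tu_n)$ on $[0,1]$, one shows $I(t_nu_n)\to\infty$ (using $v_n\to0$ in $L^4\cap L^q$), so $t_n\in(0,1)$ is an interior maximum and $\langle I'(t_nu_n),t_nu_n\rangle=0$, whence $I(t_nu_n)=\tfrac14\|t_nu_n\|^2+\int_{\RR^3}\tilde{\mathcal F}(x,t_nu_n)\,dx$; the inequality $|\tilde F(x,u)|^\kappa\leq c_4|u|^{2\kappa}\tilde{\mathcal F}(x,u)$ with $\tilde{\mathcal F}\geq0$ from (S3), inserted via Hölder's inequality, keeps this bounded — the desired contradiction. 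Once Cerami sequences are bounded, $u_n\rightharpoonup u$ in $E$, the nonlinear term passes to the limit by the compact embedding and the subcriticality $q<6$ in (S1), and writing $1+b\|\nabla u_n\|_2^2\to A\geq1$ one deduces $\|u_n-u\|\to0$ from $\langle I'(u_n)-I'(u),u_n-u\rangle\to0$ in the standard way. Hence $(C)_c$ holds and the mountain pass theorem yields a solution $u_1$ with $I(u_1)=c\geq\alpha>0$.

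The second solution comes from Ekeland's variational principle applied to $I$ on the closed ball $\overline{B}_\rho$: since $I\geq\alpha>0$ on $\partial B_\rho$, the minimizing sequence that Ekeland produces remains in $B_\rho$, is automatically bounded, and (again by the compact embedding) converges to a point $u_0$ with $I(u_0)=m:=\inf_{\overline{B}_\rho}I\leq I(0)=0$ which is a critical point of $I$. Since $m\leq0<\alpha\leq c=I(u_1)$, the critical points $u_0$ and $u_1$ are distinct, and by Lemma~\ref{lemma1.1} this gives at least two different solutions of \eqref{prob}. Apart from the compactness step, the points deserving care are the uniform-in-$x$ estimates behind the Fatou argument in the mountain pass geometry and the bookkeeping in the $v=0$ branch of the boundedness proof, where the absence of the classical (AR) condition is precisely what makes hypothesis (S3) indispensable.
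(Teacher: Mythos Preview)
Your architecture coincides with the paper's, but there is a genuine gap in the Ekeland step. You only obtain $m:=\inf_{\overline B_\rho}I\leq I(0)=0$, so nothing prevents $u_0=0$; in that case the ``second'' solution is the trivial one and the Ekeland argument is vacuous. The paper uses precisely the part of (S2) you never invoke --- the lower bound $\tilde F(x,u)\geq c_3|u|^\tau$ for $|u|\geq r_0$ with $\tau\in(0,2)$ --- to force $m<0$: combining it with $|\tilde F(x,u)|\leq\tfrac12M|u|^2$ on $|u|\leq r_0$ (from (S1)) gives $\tilde F(x,u)\geq c_3|u|^\tau-\tfrac12M|u|^2$ for all $u$, and then for a fixed $\psi\in C_c^\infty(\RR^3)$ and small $t>0$
\[
I(t\psi)\leq \tfrac{t^2}{2}\|\psi\|_H^2+\tfrac{bt^4}{4}\Big(\int_{\RR^3}|\nabla\psi|^2\,dx\Big)^2+\tfrac{Mt^2}{2}\|\psi\|_2^2-c_3t^\tau\|\psi\|_\tau^\tau<0,
\]
since $\tau<2$. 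Hence $m<0$, so $u_0\neq0$, and the two levels $m<0<c$ genuinely deliver two nontrivial solutions.

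A smaller issue: your $v=0$ branch of the boundedness proof does not close as written. From the interior-maximum identity $I(t_nu_n)=\tfrac14\|t_nu_n\|^2+\int_{\RR^3}\tilde{\mathcal F}(x,t_nu_n)\,dx$ both summands are nonnegative, so this is a \emph{lower} bound for $I(t_nu_n)$, not an upper one; (S3) gives no monotonicity of $s\mapsto\tilde{\mathcal F}(x,su)$ with which to compare against $\int\tilde{\mathcal F}(x,u_n)\leq c+1$, so the Jeanjean scheme does not produce the contradiction you claim. The paper argues directly instead: from $\tfrac12\leq\limsup_n\int_{\RR^3}|\tilde F(x,u_n)|\,|u_n|^{-2}|v_n|^2\,dx$, split at $|u_n|=r_0$, bound the small-$u$ part by $C\|v_n\|_2^2\to0$ via (S1), and on $\{|u_n|\geq r_0\}$ apply H\"older with exponents $\kappa,\kappa'$ together with (S3) and $\int\tilde{\mathcal F}(x,u_n)\leq c+1$ to get a bound by $C\|v_n\|_{2\kappa'}^2\to0$. (Incidentally, the identity you already recorded, $I(u_n)-\tfrac14\langle I'(u_n),u_n\rangle=\tfrac14\|u_n\|^2+\int_{\RR^3}\tilde{\mathcal F}(x,u_n)\,dx$, combined with $\tilde{\mathcal F}\geq0$ from (S3), bounds $\|u_n\|$ outright for a Cerami sequence, so the whole $v=0$/$v\neq0$ dichotomy is in fact unnecessary.)
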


\begin{remark}
There are some functions not satisfying the condition (AR). For example, the superlinear function
$f ( x , u ) = \sin x \ln ( 1 + | u |) u^2$ does not satisfy condition (AR). In our theorems, $\tilde{F} ( x , u )$ is allowed to be sign-changing. Even if $\tilde{F} ( x , u ) \geq  0$, the assumptions (S2) and (S3) seem to be weaker than the superlinear conditions obtained in the aforementioned references. By straightforward computation we check that the following nonlinearity $\tilde{f}$ satisfies (S2) and (S3):
\[ \tilde{f}(x,u) = a(x) (4u^4 + 2u^2 \sin u - 4u\cos u) \] where $a \in (\RR^3,\mathbb{R})$ and $ 0 < \inf_{\RR^3} a(x) \leq \sup_{\RR^3} a(x) < \infty$.
\end{remark}

\begin{remark}
To the best of our knowledge, the condition (V1) was first stated in \cite{MR2181481}, but $\inf_{x\in \RR^3} V ( x ) > 0$ was required. From (V1), one can see that the potential $V ( x )$ is allowed to be sign-changing. Therefore, the condition (V1) is weaker than those in \cite{MR2914487,MR3053562,MR3004514,MR3341384,MR3091217,MR3135452,MR3169022,MR3162387,MR3239677,MR3372045,molradccm,MR3316612,MR3365848,MR3158465,MR3191413,MR3334964,MR3347399,MR3365866}.
\end{remark}
\begin{remark}
It is not difficult to find the functions $V$ satisfying the above conditions. For example, let $V ( x )$ be a zig–zag function with respect to $| x |$ defined by
\begin{equation*}
V(x)=
\begin{cases}
2n|x|-2n(n-1)+a_0, & n-1 \leq |x| \leq (2n-1)/2,\\
-2n|x|+2n^2+a_0, & (2n-1)/2 \leq |x| \leq n,
\end{cases}
\end{equation*}
where $n \in \mathbb{N}$ and $a_0 \in \RR $.
\end{remark}
\begin{remark}
Zhang {\it et al.} \cite{MR3239677} studied \eqref{prob} with sign-changing potential $V$. They obtained multiple solutions in the case of odd nonlinearity. Here we do not need that the nonlinearity is odd and we also get two solutions for problem \eqref{prob}. Bahrouni \cite{MR3065051} obtained infinitely many solutions for \eqref{prob} with the potential and nonlinearity  both sign-changing. However, he studied the sublinear case and with odd nonlinearity. Here our results can be regarded as an extension of the results of \cite{MR3239677,MR3065051}.
\end{remark}

\section{Preliminaries and variational setting}
Hereafter, we use the following notation:
\begin{itemize}
\item $H^1 ( \RR^3 )$ denotes the usual Sobolev spaces endowed with the standard scalar product and norm $$ (u,v) = \int_{\RR^3} (\nabla u \cdot \nabla v + uv)dx, \quad \|u\| = (u,u)^{1/2}. $$
\item $D^{1,2} ( \RR^3 )$ denotes the completion of $C_0^\infty ( \RR^3 )$ with respect to the norm  $$ \|u\|^2_{D^{1,2}(\RR^3)} = \int_{\RR^3} |\nabla u|^2 dx. $$
\item  $H = \left\{ u \in H^1 ( \RR^3 ) : \int_{\RR^3} (|\nabla u |^2 + \tilde{V} ( x )| u |^ 2 ) dx < \infty \right\}$ is the Sobolev space endowed with the norm  $$ \|u\|^2_H ( \RR^3 )  = \int_{\RR^3} (|\nabla u |^2 + \tilde{V} ( x )| u |^ 2 ) dx.$$
\item $H^*$ denotes the dual space of $H$.
\item $L^s ( \RR^3 )$, $1 \leq s < +\infty$ denotes a Lebesgue space with the usual norm $\| u \|_s = \left( \int_{\RR^3} | u |^s dx \right)^{1/s}$.
\item For any $\rho > 0$ and for any $z \in \RR^3 $, $B_\rho ( z )$ denotes the ball of radius $\rho$ centered at $z$.
\item $C$ and $C_i$ denote various positive constants, which may vary from line to line.
\item $S_i$ denote the Sobolev constant for the embedding.
\item $\to$ denotes the strong convergence and $\rightharpoonup$ denotes the weak convergence.
\end{itemize}
Throughout this section, we make the following assumption instead of (V1):
\begin{itemize}
\item[(V2)] $\tilde{V} \in C(\RR^3 , \RR)$ and $\inf_{x \in \RR^3} \tilde{V}(x) > 0$. Moreover, there exists a constant $d_0 > 0$ such that for any $M > 0$, $$ \lim_{|y|\to\infty} \text{meas} \left\{ x \in \RR^3 : |x-y|\leq d_0, V(x) \leq M \right\} = 0. $$
\end{itemize}
\begin{remark}\label{remark2.1}
Under assumptions (V2), we know by Lemma 3.1 in \cite{MR2181481} that the embedding $H \hookrightarrow L^s ( \RR^3 )$ is compact for $s \in [ 2 , 6 ) $.
\end{remark}

Let $I:H\rightarrow \RR$ denote the energy functional defined by
\begin{equation}\label{eq:2.5}
    I (u) = \frac{1}{2} \int_{\RR^3}(|\nabla u|^2 + \tilde{V}(x)u^2 )  dx + \frac{b}{4} \left( \int_{\RR^3} |\nabla u|^2 dx \right)^2 - \int_{\RR^3} \tilde{F}(x,u) dx,
\end{equation}
for all $u \in H$. By condition (S1), we have
\begin{equation}\label{eq:2.6}
|\tilde{F}(x,u)| \leq \frac{c_1}{4}|u|^4+\frac{c_2}{q}|u|^q,\quad \forall (x,u)\in \RR^3\times\RR.
\end{equation}
Consequently, similar to the discussion in \cite{MR2736309}, under assumptions (V2) and \eqref{eq:2.6},  the
functional $I$ is of class $C^1 ( H , \RR ) $. Moreover,
\begin{equation}\label{eq:2.7}
\langle I'(u), v \rangle = \left( 1 + b \int_{\RR^3} |\nabla u|^2 dx \right) \int_{\RR^3} \nabla u \cdot \nabla v dx + \int_{\RR^3} \tilde{V}(x) uv dx  - \int_{\RR^3} \tilde{f}(x,u)v dx.
\end{equation}
Hence, if $u \in H$ is a critical point of $I$, then $u$ is a solution of equation \eqref{kgm}.

We now recall the mountain pass theorem of Ambrosetti and Rabinowitz \cite{ambrab} without the Palais-Smale condition (see also \cite{MR1051888}).
We also refer to Brezis and Nirenberg \cite{brenir} for a simple proof of this result which uses the Ekeland variational principle in combination with a pseudo-gradient argument.

\begin{lemma}\label{lemma2.1}
Let $E$ be a real Banach space with its dual space $E^*$, and suppose that $I \in C^1 ( E , \RR )$ satisfies $$\max\{I(0),I(e)\}\leq \mu < \eta \leq \inf_{\|u\|=\rho}I(u),$$ for some $\mu$, $\eta$, $\rho > 0$ and $e \in E$ with $\| e \| > \rho $. Let $c \geq \eta$ be characterized by  $$ c = \inf_{\gamma \in \Gamma}\max_{0\leq\tau\leq1} I(\gamma(\tau)), $$ where $\Gamma = \{ \gamma \in C ([ 0 , 1 ], E ) : \gamma ( 0 ) = 0 , \gamma ( 1 ) = e \}$ is the set of all continuous paths joining 0 and $e$. Then there exists a sequence $\{ u_n \} \subset E$ such that
$$ I(u_n) \to c \geq \eta \text{ and }(1+\|u_n\|)\|I'(u_n)\|_{E^*} \to 0, \text{ as }n \to \infty.$$
\end{lemma}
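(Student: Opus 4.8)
The plan is to derive the conclusion by contradiction, via a quantitative deformation argument adapted to the Cerami condition. First I would record the mountain-pass geometry fact that $c \geq \eta$: any $\gamma \in \Gamma$ joins $0$ (with $\|0\| = 0 < \rho$) to $e$ (with $\|e\| > \rho$), so by continuity of $\tau \mapsto \|\gamma(\tau)\|$ it meets the sphere $\{\|u\| = \rho\}$ at some $\tau_0$; hence $\max_\tau I(\gamma(\tau)) \geq I(\gamma(\tau_0)) \geq \inf_{\|u\|=\rho} I \geq \eta$, and taking the infimum over $\Gamma$ gives $c \geq \eta$. Next I negate the assertion: if no sequence $\{u_n\}$ with $I(u_n)\to c$ and $(1+\|u_n\|)\|I'(u_n)\|_{E^*}\to 0$ exists, then there are $\varepsilon, \delta > 0$ with $(1+\|u\|)\|I'(u)\|_{E^*} \geq \delta$ on the strip $A := \{u \in E : |I(u) - c| \leq 2\varepsilon\}$; shrinking $\varepsilon$ I may also assume $c - 2\varepsilon > \mu$, so that $0$ and $e$ lie strictly below $A$.

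The heart of the argument is the construction of a weighted pseudo-gradient flow. Since $I \in C^1(E,\RR)$, on the regular set $\{I' \neq 0\}$ there is a locally Lipschitz pseudo-gradient field $W$; after normalizing I obtain a locally Lipschitz $V$ with $\|V(u)\| \leq 1$ and $\langle I'(u), V(u)\rangle \geq \tfrac12 \|I'(u)\|_{E^*}$. Choosing a locally Lipschitz cutoff $\chi : E \to [0,1]$ equal to $1$ on $\{|I - c| \leq \varepsilon\}$ and vanishing off $A$, I define the flow
\[ \tfrac{d}{dt}\sigma(t,u) = -\chi(\sigma)\,(1+\|\sigma\|)\,V(\sigma), \qquad \sigma(0,u) = u. \]
The weight $(1+\|\cdot\|)$ is the crucial ingredient: it gives $\|\partial_t \sigma\| \leq 1 + \|\sigma\|$, whence $\|\sigma(t,u)\| \leq (1+\|u\|)e^{t} - 1$, so the flow cannot blow up in finite time and $\sigma$ is globally defined on $[0,\infty) \times E$. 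This is exactly the scaling dictated by the Cerami condition, and it is the step that distinguishes this lemma from the classical Palais--Smale version.

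I would then exploit the energy decrease. Since $\chi \geq 0$ and $\langle I', V\rangle \geq 0$, the map $t \mapsto I(\sigma(t,u))$ is nonincreasing; moreover wherever $\chi(\sigma) = 1$ one has, using the lower bound on $A$,
\[ \tfrac{d}{dt} I(\sigma) = -\chi(\sigma)(1+\|\sigma\|)\langle I'(\sigma), V(\sigma)\rangle \leq -\tfrac12 (1+\|\sigma\|)\|I'(\sigma)\|_{E^*} \leq -\tfrac{\delta}{2}. \]
Running the flow up to time $T := 4\varepsilon/\delta$, this shows $\sigma(T, \cdot)$ maps the sublevel set $\{I \leq c + \varepsilon\}$ into $\{I \leq c - \varepsilon\}$, while fixing every point lying below the strip $A$ (where $\chi = 0$), in particular $0$ and $e$. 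Now take any $\gamma \in \Gamma$ with $\max_\tau I(\gamma(\tau)) \leq c + \varepsilon$, which is possible by the definition of $c$. Then $\beta := \sigma(T, \cdot) \circ \gamma$ is again a continuous path from $0$ to $e$, so $\beta \in \Gamma$, yet $\max_\tau I(\beta(\tau)) \leq c - \varepsilon < c$, contradicting $c = \inf_{\gamma \in \Gamma}\max_\tau I(\gamma(\tau))$. This contradiction yields the desired Cerami sequence.

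The main obstacle is precisely this deformation step under the Cerami (rather than Palais--Smale) hypothesis: one must simultaneously guarantee a uniform energy decrease and a globally defined, non-exploding flow, and the weight $1+\|u\|$ is what reconciles these two demands. An equivalent route, closer to the Brezis--Nirenberg proof cited above, is to apply Ekeland's variational principle to $\Phi(\gamma) := \max_\tau I(\gamma(\tau))$ on $\Gamma$, where $E$ is endowed with the Finsler-type distance $d(u,v) = \inf_\sigma \int_0^1 \|\sigma'(s)\|/(1+\|\sigma(s)\|)\,ds$; the $1/(1+\|u\|)$ weight in this metric produces the same factor $(1+\|u_n\|)$ in the gradient estimate, after localizing near the set where each near-optimal path attains its maximum.
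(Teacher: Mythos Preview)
The paper does not actually prove Lemma~\ref{lemma2.1}: it is stated as a known result and attributed to Ambrosetti--Rabinowitz and to \cite{MR1051888}, with an additional pointer to the Brezis--Nirenberg proof via Ekeland's principle. So there is no ``paper's own proof'' to compare against; the lemma is quoted, not proved.

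Your proposal is a correct and standard proof of the Cerami-sequence mountain pass lemma. The two ingredients you emphasize---the weighted vector field $-\chi(u)(1+\|u\|)V(u)$ to ensure global existence of the flow, and the resulting uniform energy drop $-\delta/2$ across the strip $\{|I-c|\leq\varepsilon\}$---are exactly the points that distinguish the Cerami version from the classical Palais--Smale one, and your deformation contradiction is clean. Your closing remark about the alternative Ekeland/Finsler-metric route is precisely the Brezis--Nirenberg argument the paper alludes to, so in effect you have sketched both of the approaches the authors cite. One cosmetic point: when you construct the cutoff $\chi$, you should note that it factors through $I$ (i.e., $\chi(u)=\psi(I(u))$ for a Lipschitz $\psi:\RR\to[0,1]$), since a locally Lipschitz function of $u$ alone with the stated level-set behaviour is not automatic; this is the usual choice and makes the vector field locally Lipschitz as required.
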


This kind of sequence is usually called a Cerami sequence. Recall that a $C^1$ functional $I$ satisfies the Cerami compactness condition at level $c$ ($(C)_c$ condition for short) if any sequence $\{ u_n \} \subset H$ such that $I ( u_n ) \to c$ and $( 1 + \| u_n \|)\| I'  ( u_n )\|_{E^*}  \to 0$ has a convergent subsequence.

Here, we give the sketch of how to look for two distinct critical points of the functional $I$. First, we consider a minimization
of $I$ constrained to a neighborhood of zero via the Ekeland variational principle (see \cite{MR0346619,MR1400007}) and we can find a critical point
of $I$ which achieves the local minimum of $I$ and the level of this local minimum is negative (see  Step 1 of the proof of
Theorem \ref{thm1.1}). Next, around the ``zero'' point, by using mountain pass theorem (see \cite{ambrab}), we obtain a second critical
point of $I$ with its positive level (see Step 2 of the proof of Theorem \ref{thm1.1}). Obviously, these two critical points do not coincide
since they have different energy levels.

To prove Theorem \ref{thm1.1}, we cite the following auxiliary result, see \cite{MR3011282}.
\begin{lemma}\label{tang}
  Assume that $p_1$, $p_2 > 1$, $r$, $q \geq 1$ and $\Omega \subseteq \RR^3$. Let $g(x,t)$ be a Carath\'{e}odory function on $\Omega \times \RR$  satisfying
  \begin{equation*}
    |g(x,t)| \leq a_1 |t|^{(p_1 - 1)/r} + a_2 |t|^{(p_2 -1)/r}, \quad \forall (x,t) \in \Omega\times\RR,
  \end{equation*}
  where $a_1$, $a_2 \geq 0$. If $u_n \to u$ in $L^{p_1} (\Omega) \cap L^{p_2}(\Omega)$, and $u_n \to u$ a.e. $x \in \Omega$, then for any $v \in L^{p_1q}(\Omega) \cap L^{p_2q}(\Omega)$,
  \begin{equation}\label{eq:22.14}
    \lim_{n \to \infty} \int_\Omega |g(x,u_n) - g(x,u)|^{r}|v|^{q} dx \to 0.
  \end{equation}
\end{lemma}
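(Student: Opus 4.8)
The plan is to reduce the statement to the Lebesgue dominated convergence theorem via a subsequence argument, using the classical fact that convergence in $L^{p}$ forces, along a subsequence, both pointwise a.e.\ convergence and domination by a fixed function of $L^{p}$.

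First I would argue by contradiction. If \eqref{eq:22.14} failed, there would exist $\varepsilon_0>0$ and a subsequence, still denoted $\{u_n\}$, with $\int_\Omega \abs{g(x,u_n)-g(x,u)}^{r}\abs{v}^{q}\dd x\ge \varepsilon_0$ for all $n$. Since $u_n\to u$ in $L^{p_1}(\Omega)$ and in $L^{p_2}(\Omega)$, I would pass to a further (diagonal) subsequence along which $u_n(x)\to u(x)$ for a.e.\ $x\in\Omega$ and, simultaneously, $\abs{u_n(x)}\le h_1(x)$ and $\abs{u_n(x)}\le h_2(x)$ a.e., where $h_1\in L^{p_1}(\Omega)$ and $h_2\in L^{p_2}(\Omega)$ are fixed.

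Next, using $r\ge 1$ together with the growth hypothesis on $g$, I would estimate
\[
\abs{g(x,u_n)-g(x,u)}^{r}\le 2^{r-1}\bigl(\abs{g(x,u_n)}^{r}+\abs{g(x,u)}^{r}\bigr)\le C\bigl(h_1^{\,p_1-1}+h_2^{\,p_2-1}+\abs{u}^{p_1-1}+\abs{u}^{p_2-1}\bigr),
\]
with $C=C(r,a_1,a_2)$. Multiplying by $\abs{v}^{q}$ and applying H\"older's inequality with the conjugate exponents $\tfrac{p_i}{p_i-1}$ and $p_i$, the resulting bound lies in $L^1(\Omega)$: indeed $h_i^{\,p_i-1}$ and $\abs{u}^{p_i-1}$ belong to $L^{p_i/(p_i-1)}(\Omega)$, while $\abs{v}^{q}\in L^{p_i}(\Omega)$ precisely because $v\in L^{p_1 q}(\Omega)\cap L^{p_2 q}(\Omega)$. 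This gives an $n$-independent $L^1$ majorant for $\abs{g(x,u_n)-g(x,u)}^{r}\abs{v}^{q}$. Since $g$ is a Carath\'eodory function, $g(x,u_n(x))\to g(x,u(x))$ for a.e.\ $x\in\Omega$, hence $\abs{g(x,u_n)-g(x,u)}^{r}\abs{v}^{q}\to 0$ a.e., and the dominated convergence theorem yields $\int_\Omega\abs{g(x,u_n)-g(x,u)}^{r}\abs{v}^{q}\dd x\to 0$ along this subsequence, contradicting the choice $\ge\varepsilon_0$. Therefore the full sequence converges and \eqref{eq:22.14} holds.

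There is no real obstacle here: the proof is a routine application of the generalized dominated convergence principle. The only points requiring care are the correct bookkeeping of the H\"older exponents — which is exactly what makes the two integrability assumptions on $v$ natural (and essentially sharp) — and the extraction of a single subsequence realizing a.e.\ convergence together with $L^{p_1}$- and $L^{p_2}$-domination at the same time.
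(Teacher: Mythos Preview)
The paper does not supply its own proof of this lemma; it is quoted as an auxiliary result from \cite{MR3011282} and used without argument. Your proof is the standard one and is correct: the subsequence extraction giving $L^{p_i}$-dominating functions $h_i$, the elementary bound $|g(x,u_n)-g(x,u)|^r\le C\bigl(h_1^{p_1-1}+h_2^{p_2-1}+|u|^{p_1-1}+|u|^{p_2-1}\bigr)$, and H\"older's inequality with the conjugate pair $\bigl(\tfrac{p_i}{p_i-1},p_i\bigr)$ produce a fixed $L^1$ majorant, after which dominated convergence yields the contradiction. One small remark: the hypothesis already includes $u_n\to u$ a.e., so the only reason you pass to a subsequence is to obtain the dominating functions $h_1,h_2$; otherwise there is nothing to add.
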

\section{Proof of the main result}

\begin{lemma}\label{lemma2.2}
Assume that the conditions (V2) and (S1) hold. Then there exist $\rho$, $\eta > 0$ such that $\inf \{ I ( u ) : u \in H \text{ with } \| u \|_H = \rho\} > \eta $.
\end{lemma}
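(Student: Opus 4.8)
The statement is the standard "mountain-pass geometry near the origin", so the plan is a direct sub-level estimate for $I$ on the sphere $\|u\|_H=\rho$ with $\rho$ small. First I would recall the growth bound \eqref{eq:2.6} on the primitive, which is a consequence of (S1): $|\tilde F(x,u)|\le \frac{c_1}{4}|u|^4+\frac{c_2}{q}|u|^q$ for all $(x,u)$. Plugging this into the definition \eqref{eq:2.5} of $I$ and discarding the nonnegative Kirchhoff term $\frac{b}{4}\left(\int_{\RR^3}|\nabla u|^2\,dx\right)^2$, I get
\[
I(u)\ \ge\ \frac12\|u\|_H^2-\frac{c_1}{4}\int_{\RR^3}|u|^4\,dx-\frac{c_2}{q}\int_{\RR^3}|u|^q\,dx .
\]

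Next I would invoke the continuous Sobolev embeddings $H\hookrightarrow L^s(\RR^3)$ for $s\in[2,6]$; these are available because (V2) gives $\inf_{\RR^3}\tilde V>0$, so $\|\cdot\|_H$ controls the $H^1$-norm, and in particular the compact embeddings of Remark~\ref{remark2.1} for $s\in[2,6)$ already cover $s=4$ and $s=q\in(4,6)$. Writing $\|u\|_4\le S_4\|u\|_H$ and $\|u\|_q\le S_q\|u\|_H$, the estimate becomes
\[
I(u)\ \ge\ \frac12\|u\|_H^2-\frac{c_1 S_4^4}{4}\|u\|_H^4-\frac{c_2 S_q^q}{q}\|u\|_H^q
\ =\ \|u\|_H^2\left(\frac12-\frac{c_1 S_4^4}{4}\|u\|_H^2-\frac{c_2 S_q^q}{q}\|u\|_H^{q-2}\right).
\]

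Since $q\in(4,6)$, both exponents $2$ and $q-2$ are strictly positive, so the bracketed quantity tends to $\tfrac12$ as $\|u\|_H\to0$. Hence there is $\rho>0$ so small that the bracket is $\ge\tfrac14$ whenever $\|u\|_H=\rho$; for such $u$ one has $I(u)\ge\tfrac14\rho^2$, and setting $\eta:=\tfrac14\rho^2>0$ (or any value strictly below $\tfrac14\rho^2$) proves the claim. There is no real obstacle here: the only point requiring a word of justification is the continuity of the embeddings $H\hookrightarrow L^4$ and $H\hookrightarrow L^q$, which I would cite from Remark~\ref{remark2.1}, and the fact that both powers of $\|u\|_H$ appearing in the correction term exceed $2$, which is exactly where the hypothesis $q>4$ is used.
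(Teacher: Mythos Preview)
Your proof is correct and follows essentially the same route as the paper's: both use the growth bound \eqref{eq:2.6} derived from (S1), discard the nonnegative Kirchhoff term, apply the continuous embeddings $H\hookrightarrow L^4,L^q$ to obtain $I(u)\ge \tfrac12\|u\|_H^2 - C_1\|u\|_H^4 - C_2\|u\|_H^q$, and then choose $\rho$ small using $q>4$. Your factorisation and explicit choice $\eta<\tfrac14\rho^2$ make the conclusion slightly more precise than the paper's closing sentence, but the argument is the same.
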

\begin{proof}
By \eqref{eq:2.6} and the Sobolev inequality, we have
\begin{align}\label{eq:2.9}
\left| \int_{\RR^3} \tilde{F}(x,u) dx \right| & \leq \int_{\RR^3} \left| \frac{c_1}{4}|u|^4 + \frac{c_2}{q}|u|^q \right| dx\\\nonumber
& = \frac{c_1}{4}\|u\|_4^4 + \frac{c_2}{q}\|u\|_q^q\\\nonumber
& \leq S_4 \frac{c_1}{4}\|u\|_H^4 + S_q \frac{c_2}{q}\|u\|_H^q,
\end{align}
for any $u \in H$. Combining \eqref{eq:2.5} with \eqref{eq:2.9}, we obtain
\begin{align}\label{eq:2.10}
I(u) & = \frac{1}{2} \int_{\RR^3}(|\nabla u|^2 + \tilde{V}(x)u^2 )  dx + \frac{b}{4} \left( \int_{\RR^3} |\nabla u|^2 dx \right)^2 - \int_{\RR^3} \tilde{F}(x,u) dx\\\nonumber
& \geq \frac{1}{2} \|u\|_H^2 - \int_{\RR^3} | \tilde{F}(x,u) | dx\\\nonumber
& \geq \frac{1}{2} \|u\|_H^2 -  S_4 \frac{c_1}{4}\|u\|_H^4 - S_q \frac{c_2}{q}\|u\|_H^q\\\nonumber
& = \frac{1}{2} \|u\|_H^2 -  C_1 {4}\|u\|_H^4 - C_2 \|u\|_H^q.
\end{align}
Since $q \in ( 4 , 6 ) $, we deduce that there exists $\eta > 0$ such that this lemma holds if we let $\|u\|_H = \rho > 0$ be small enough.
\end{proof}

\begin{lemma}\label{lemma2.3}
Assume that the conditions (V2) and (S2) hold. Then there exists $v \in H$ with $\|v\|_H = \rho$ such that $I (v) < 0$, where $\rho$ is given in Lemma \ref{lemma2.2}.
\end{lemma}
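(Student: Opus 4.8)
The plan is to find the required point on a ray through the origin, exploiting the superlinearity encoded in (S2) to push the energy below zero. Fix $\varphi \in C_0^\infty(\RR^3)$ with $\varphi \not\equiv 0$ and put $K := \operatorname{supp}\varphi$. For $t>0$,
\begin{equation*}
I(t\varphi) = \frac{t^{2}}{2}\,\|\varphi\|_H^{2} + \frac{b\,t^{4}}{4}\left(\int_{\RR^3}|\nabla\varphi|^{2}\dd x\right)^{2} - \int_{\RR^3}\tilde{F}(x,t\varphi)\dd x ,
\end{equation*}
so it is enough to prove $t^{-4}\int_{\RR^3}\tilde{F}(x,t\varphi)\dd x \to +\infty$ as $t\to+\infty$: then $I(t\varphi)/t^{4}\to-\infty$, hence $I(t\varphi)\to-\infty$, and we may take $v:=t_0\varphi$ with $t_0$ so large that $I(v)<0$; since $\|v\|_H = t_0\|\varphi\|_H$, this also gives $\|v\|_H>\rho$, as required for the mountain pass geometry of Lemma~\ref{lemma2.1}.

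For the key limit I would use Fatou's lemma. By the second part of (S2), $\tilde{F}(x,u)\geq c_3|u|^{\tau}\geq 0$ whenever $|u|\geq r_0$, while on the compact set $K\times[-r_0,r_0]$ the continuous function $\tilde{F}$ is bounded, say by $M_0$; hence $\tilde{F}(x,t\varphi(x))\geq -M_0\chi_K(x)$ for all $x$ and all $t>0$, so the functions $t^{-4}\bigl(\tilde{F}(x,t\varphi(x))+M_0\chi_K(x)\bigr)$ are nonnegative and Fatou applies. On the set $\{\varphi\neq 0\}$ we have $|t\varphi(x)|\to\infty$, and since $\tilde{F}(x,t\varphi(x))\geq 0$ for $t$ large, the first part of (S2) gives
\begin{equation*}
\frac{\tilde{F}(x,t\varphi(x))}{t^{4}} = |\varphi(x)|^{4}\,\frac{\tilde{F}(x,t\varphi(x))}{|t\varphi(x)|^{4}} \longrightarrow +\infty ,
\end{equation*}
while $t^{-4}M_0\chi_K\to 0$. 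As $\varphi$ is continuous and not identically zero, $\meas\{\varphi\neq 0\}>0$, so Fatou yields $\liminf_{t\to\infty} t^{-4}\int_{\RR^3}\tilde{F}(x,t\varphi)\dd x = +\infty$, which is exactly what we need.

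The delicate point is the application of Fatou's lemma: it requires a lower bound for $\tilde{F}(x,t\varphi(x))$ that is uniform in both $x$ and $t$, and this is precisely what the sign/growth clause $\inf_{x}\tilde{F}(x,u)\geq c_3|u|^{\tau}\geq 0$ for $|u|\geq r_0$ of (S2) supplies, together with the compactness of $\operatorname{supp}\varphi$. It is worth noting that (S2) is used in full strength here: it forces $\tilde{F}(x,u)/|u|^{4}\to+\infty$ (not merely $|\tilde{F}|/|u|^{4}$, thanks to the positivity of $\tilde{F}$ for $|u|\geq r_0$), and this is exactly what makes the positive Kirchhoff term $\tfrac{b}{4}(\int_{\RR^3}|\nabla\varphi|^{2}\dd x)^{2}t^{4}$ harmless, so that the whole functional is negative far out along the ray $t\varphi$.
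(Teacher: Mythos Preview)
Your proof is correct and follows essentially the same route as the paper's: pick a nonzero test function, examine $I(t\varphi)/t^{4}$, and use (S2) together with Fatou's lemma to show the nonlinear term dominates the Kirchhoff quartic, so that $I(t\varphi)\to-\infty$ and one may take $v=t_0\varphi$ with $t_0$ large. Your version is in fact more careful than the paper's, which applies Fatou without specifying the test function or exhibiting the integrable lower bound; your choice of a compactly supported $\varphi$ and the bound $\tilde F(x,t\varphi(x))\geq -M_0\chi_K(x)$ fills exactly that gap.
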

\begin{proof}
By \eqref{eq:2.5}, we have
$$ \frac{I(tu)}{t^4} = \frac{1}{2t^2} \|u\|^2_H + \frac{b}{4} \left( \int_{\RR^3} |\nabla u|^2 dx \right)^2 - \frac{1}{t^4} \int_{\RR^3} \tilde{F}(x,tu) dx. $$
Then, by (S2) and Fatou's lemma we can deduce that
\begin{align*}
\lim_{t \to \infty}\frac{I(tu)}{t^4} & = \lim_{t \to \infty} \left[ \frac{1}{2t^2} \|u\|^2_H + \frac{b}{4} \left( \int_{\RR^3} |\nabla u|^2 dx \right)^2 - \frac{1}{t^4} \int_{\RR^3} \tilde{F}(x,tu) dx \right]\\
& = \limsup_{t \to \infty} \left[ \frac{b}{4} \left( \int_{\RR^3} |\nabla u|^2 dx \right)^2 - \frac{1}{t^4} \int_{\RR^3} \tilde{F}(x,tu) dx \right] \\
& \leq \frac{b}{4} \left( \int_{\RR^3} |\nabla u|^2 dx \right)^2 -   \liminf_{t \to \infty} \int_{\RR^3} \frac{\tilde{F}(x,tu)}{t^4u^4} u^4 dx\\
& \leq C_3 \|u\|_H^4 - \int_{\RR^3} \liminf_{t \to \infty} \frac{\tilde{F}(x,tu)}{t^4u^4} u^4 dx\\
& = - \infty \text{ as }t \to \infty.
\end{align*}
Thus the lemma is proved by taking $v = t_0 u$ with  large enough $t_0$.
\end{proof}
Based on Lemmata \ref{lemma2.2} and \ref{lemma2.3}, Lemma \ref{lemma2.1} implies that there is a sequence $\{ u_n \} \subset H$ such that
\begin{equation}\label{eq:2.12}
I(u_n) \to c >0 \text{ and }(1+\|u_n\|_H)\|I'(u_n)\|_{H^*} \to 0, \text{ as }n \to \infty.
\end{equation}
\begin{lemma}\label{lemma2.4}
Assume that the conditions (V2), (S1), (S2) and (S3) hold. Then the sequence $\{ u_n \}$ defined in \eqref{eq:2.12} is bounded in $H$.
\end{lemma}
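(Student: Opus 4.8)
The plan is the standard device for superlinear problems lacking the Ambrosetti--Rabinowitz condition, specialized to the Kirchhoff setting: one estimates the quantity $I(u_n)-\tfrac{1}{4}\langle I'(u_n),u_n\rangle$ from below by a positive multiple of $\|u_n\|_H^2$, the constant $4$ being dictated by the homogeneity of the nonlocal term $\tfrac{b}{4}\bigl(\int_{\RR^3}|\nabla u|^2\,dx\bigr)^2$ and by the normalization chosen in (S3).

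First I would extract from the Cerami property \eqref{eq:2.12} two facts: $I(u_n)\to c$, and, since $|\langle I'(u_n),u_n\rangle|\le\|I'(u_n)\|_{H^*}\|u_n\|_H\le(1+\|u_n\|_H)\|I'(u_n)\|_{H^*}\to 0$, also $\langle I'(u_n),u_n\rangle\to 0$; hence $I(u_n)-\tfrac{1}{4}\langle I'(u_n),u_n\rangle\to c$.

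Next, plugging $v=u_n$ into \eqref{eq:2.7} and subtracting one quarter of the resulting identity from \eqref{eq:2.5}, a direct computation shows that the quartic Kirchhoff terms cancel and that $\int_{\RR^3}|\nabla u_n|^2\,dx$ and $\int_{\RR^3}\tilde V(x)u_n^2\,dx$ recombine into $\tfrac14\|u_n\|_H^2$, so that
\[
I(u_n)-\tfrac{1}{4}\langle I'(u_n),u_n\rangle=\tfrac{1}{4}\|u_n\|_H^2+\int_{\RR^3}\Bigl(\tfrac{1}{4}u_n\tilde f(x,u_n)-\tilde F(x,u_n)\Bigr)\,dx=\tfrac{1}{4}\|u_n\|_H^2+\int_{\RR^3}\tilde{\mathcal F}(x,u_n)\,dx,
\]
all integrals being finite because $u_n\in H\hookrightarrow L^4(\RR^3)\cap L^q(\RR^3)$ and (S1) holds. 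By the first part of (S3), $\tilde{\mathcal F}(x,u_n)\ge 0$ a.e., so the last integral is nonnegative and therefore $\tfrac{1}{4}\|u_n\|_H^2\le I(u_n)-\tfrac{1}{4}\langle I'(u_n),u_n\rangle=c+o(1)$, which gives $\|u_n\|_H^2\le 4c+o(1)$ and hence the boundedness of $\{u_n\}$ in $H$.

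I do not expect any real obstacle here beyond carrying out the algebraic identity correctly; the conceptual point is that the sign condition $\tilde{\mathcal F}\ge 0$ and the exponent $4$ matching the Kirchhoff term are exactly what replace (AR). Note that the second, genuinely restrictive inequality in (S3), namely $|\tilde F(x,u)|^\kappa\le c_4|u|^{2\kappa}\tilde{\mathcal F}(x,u)$, plays no role in this boundedness argument; together with Lemma \ref{tang} it will instead be invoked afterwards to promote this bounded Cerami sequence to a strongly convergent subsequence when establishing the $(C)_c$ condition.
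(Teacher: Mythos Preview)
Your argument is correct and is strictly simpler than the paper's. In fact the paper's own display \eqref{eq:2.13} records precisely your identity
\[
c+1\;\ge\; I(u_n)-\tfrac14\langle I'(u_n),u_n\rangle
\;=\;\tfrac14\|u_n\|_H^2+\int_{\RR^3}\tilde{\mathcal F}(x,u_n)\,dx
\;\ge\;\tfrac14\|u_n\|_H^2,
\]
from which boundedness is immediate; but the authors discard the term $\tfrac14\|u_n\|_H^2$, retain only the bound $\int\tilde{\mathcal F}(x,u_n)\,dx\le c+1$, and then run a full contradiction argument: they assume $\|u_n\|_H\to\infty$, normalize $v_n=u_n/\|u_n\|_H$, pass to a weak limit $v$, and split into the cases $v=0$ and $v\neq 0$. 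Case $v=0$ uses the \emph{second} inequality in (S3) together with H\"older, while Case $v\neq 0$ uses (S2) and Fatou's lemma. This machinery is exactly what one needs when the coefficient in front of the norm term vanishes (as it would if one subtracted $\tfrac12\langle I'(u_n),u_n\rangle$ in a pure Schr\"odinger problem), but here the Kirchhoff quartic forces the factor $\tfrac14$, the quadratic norm survives, and your direct route works.

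One small correction to your closing remark: the sharp inequality $|\tilde F(x,u)|^\kappa\le c_4|u|^{2\kappa}\tilde{\mathcal F}(x,u)$ in (S3) is \emph{not} invoked in the paper's $(C)_c$ argument (Lemma~\ref{lemma2.6} uses only (V2) and (S1) via Lemma~\ref{tang}); in the paper it is used solely in Case~1 of the boundedness proof you have just bypassed. With your shortcut that hypothesis becomes superfluous for Lemma~\ref{lemma2.4} as well.
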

\begin{proof}
Arguing by contradiction, we can assume $\| u_n \|_H \to \infty $. Define $v_n := \frac{u_n}{\|u_n\|_H} $. Clearly, $\| v_n \|_H = 1$ and $\| v_n \|_s \leq S_s \|v_n\|_H = S_s$, for $2 \leq s < 6$. Observe that for large enough $n$, we can get from \eqref{eq:2.12} and (S3) that
\begin{align}\label{eq:2.13}
c+1  & \geq I(u_n) - \frac{1}{4}\langle I'(u_n), u_n \rangle\\\nonumber
& = \frac{1}{4} \|u_n\|_H^2 + \int_{\RR^3} \left( \frac{1}{4} \tilde{f}(x,u_n)u_n - \tilde{F}(x,u_n) \right) dx\\\nonumber
& \geq \int_{\RR^3} \tilde{\mathcal{F}}(x,u_n)dx.
\end{align}
In view of \eqref{eq:2.5} and \eqref{eq:2.12}, we have
\begin{align}\label{eq:2.14}
\frac{1}{2} & = \frac{I(u_n)}{\|u_n\|_H^2}+\frac{1}{\|u_n\|_H^2}\int_{\RR^3}\tilde{F}(x,u_n)dx-\frac{b}{4\|u_n\|_H^2} \left( \int_{\RR^3} |\nabla u|^2 dx \right)^2\\\nonumber
& \leq \frac{I(u_n)}{\|u_n\|_H^2} + \frac{1}{\|u_n\|_H^2}\int_{\RR^3}|\tilde{F}(x,u_n)|dx\\\nonumber
& \leq \limsup_{n\to\infty}\left[ \frac{I(u_n)}{\|u_n\|_H^2} + \frac{1}{\|u_n\|_H^2}\int_{\RR^3}|\tilde{F}(x,u_n)|dx \right]\\\nonumber
& \leq \limsup_{n\to\infty}\int_{\RR^3}\frac{|\tilde{F}(x,u_n)|}{\|u_n\|_H^2}dx.
\end{align}
For $0 \leq a < b$, let $\Omega_n ( a , b ) := \{ x \in \RR^3 : a \leq | u_n ( x )| < b \} $. Going if necessary to a subsequence, we may assume that $v_n \rightharpoonup v$ in $H$. Then by Remark \ref{remark2.1}, we have $v_n \to v$ in $L^s ( \RR^3 )$ for $2 \leq s < 6$, and $v_n \to v$ a.e. on $\RR^3 $.

We now consider the following two possible cases concerning $v $.

{\it Case 1}. If $v = 0$, then $v_n \to 0$ in $L^s ( \RR^3 )$ for $2 \leq s < 6$, and $v_n \to 0$ a.e. on $\RR^3 $. Hence it follows from \eqref{eq:2.6} and $v_n := \frac{u_n}{\|u_n\|_H^2}$ that
\begin{align}\label{eq:2.15}
\int_{\Omega_n(0,r_0)}\frac{|\tilde{F}(x,u_n)|}{\|u_n\|_H^2} dx & = \int_{\Omega_n(0,r_0)}\frac{|\tilde{F}(x,u_n)|}{|u_n|^2}|v_n|^2 dx\\\nonumber
& \leq \left(\frac{c_1}{4}r_0^2+\frac{c_2}{q}r_0^{q-2}\right)\int_{\Omega_n(0,r_0)}|v_n|^2 dx\\\nonumber
& \leq C_4 \int_{\RR^3}|v_n|^2 dx \to 0, \text{ as }n \to \infty.
\end{align}
 By(S3), we know that $\kappa > 1$. Thus, if we set $\kappa' = \kappa/(\kappa - 1 ) $, then $2 \kappa' \in ( 2 , 6 ) $. Hence it follows from (S3) and \eqref{eq:2.13} that
\begin{align}\label{eq:2.16}
\int_{\Omega_n(r_0,\infty)}\frac{|\tilde{F}(x,u_n)|}{\|u_n\|_H^2} dx & = \int_{\Omega_n(r_0,\infty)}\frac{|\tilde{F}(x,u_n)|}{|u_n|^2}|v_n|^2 dx\\\nonumber
& \leq \left[ \int_{\Omega_n(r_0,\infty)} \left( \frac{|\tilde{F}(x,u_n)|}{|u_n|^2} \right)^\kappa dx \right]^{1/\kappa} \left[ \int_{\Omega_n(r_0,\infty)} |v_n|^{2\kappa'} dx \right]^{1/\kappa'}\\\nonumber
& \leq c_4^{1/\kappa}\left[ \int_{\Omega_n(r_0,\infty)} \tilde{\mathcal{F}}(x,u_n) dx \right]^{1/\kappa}\left[ \int_{\Omega_n(r_0,\infty)} |v_n|^{2\kappa'} dx \right]^{1/\kappa'}\\\nonumber
& \leq c_4^{1/\kappa} (c+1)^{1/\kappa}\left[ \int_{\Omega_n(r_0,\infty)} |v_n|^{2\kappa'} dx \right]^{1/\kappa'}\\\nonumber
& \leq C_5 \left[ \int_{\Omega_n(r_0,\infty)} |v_n|^{2\kappa'} dx \right]^{1/\kappa'} \to 0, \text{ as }n \to \infty.
\end{align}
Combining \eqref{eq:2.15} with \eqref{eq:2.16}, we have
$$ \int_{\RR^3}\frac{|\tilde{F}(x,u_n)|}{\|u_n\|_H^2}dx = \int_{\Omega_n(0,r_0)}\frac{|\tilde{F}(x,u_n)|}{\|u_n\|_H^2} dx + \int_{\Omega_n(r_0,\infty)}\frac{|\tilde{F}(x,u_n)|}{\|u_n\|_H^2} dx \to 0 \text{ as }n \to \infty, $$
which contradicts \eqref{eq:2.14}.

{\it Case 2}. If $v\not= 0$, we set $A := \{ x \in \RR^3 : v( x )\not= 0 \} $. Then meas$ ( A ) > 0$. For a.e. $x \in A$, we have $\lim_{n\to\infty} | u_n ( x )| = \infty $. Hence $A \subset \Omega_n ( r_0 , \infty)$ for  large enough $n \in \mathbb{N}$. It follows from \eqref{eq:2.5}, \eqref{eq:2.6}, \eqref{eq:2.12} and Fatou's lemma that
\begin{align}\label{eq:2.18}
0 & = \lim_{n\to\infty}\frac{c+o(1)}{\|u_n\|_H^4}=\lim_{n\to\infty}\frac{I(u_n)}{\|u_n\|_H^4}\\\nonumber
& = \lim_{n\to\infty} \left[ \frac{1}{2\|u_n\|_H^2} + \frac{b}{4\|u_n\|_H^4} \left( \int_{\RR^3} |\nabla u|^2 dx \right)^2 - \int_{\RR^3} \frac{\tilde{F}(x,u_n)}{\|u_n\|_H^4} dx \right]\\\nonumber
& = \left[ \frac{b}{4\|u_n\|_H^4} \left( \int_{\RR^3} |\nabla u|^2 dx \right)^2 - \int_{\Omega_n(0,r_0)}\frac{|\tilde{F}(x,u_n)|}{|u_n|^4} |v_n|^4 dx - \int_{\Omega_n(r_0,\infty)}\frac{|\tilde{F}(x,u_n)|}{|u_n|^4}|v_n|^4 dx \right]\\\nonumber
& \leq \frac{b}{4} + \limsup_{n\to\infty}\int_{\Omega_n(0,r_0)}\left( \frac{c_1}{4}+\frac{c_2}{q}|u_n|^{q-4} \right)|v_n|^4dx - \liminf_{n\to\infty}\left[ \int_{\Omega_n(r_0,\infty)}\frac{|\tilde{F}(x,u_n)|}{|u_n|^4}|v_n|^4 dx \right]\\\nonumber
& \leq \frac{b}{4} + \left( \frac{c_1}{4}+\frac{c_2}{q}|r_0|^{q-4} \right) \limsup_{n\to\infty}\int_{\Omega_n(0,r_0)}|v_n|^4dx - \liminf_{n\to\infty}\left[ \int_{\Omega_n(r_0,\infty)}\frac{|\tilde{F}(x,u_n)|}{|u_n|^4}|v_n|^4 dx \right]\\\nonumber
& \leq \frac{b}{4} + C_8 - \liminf_{n\to\infty} \int_{\Omega_n(r_0,\infty)}\frac{|\tilde{F}(x,u_n)|}{|u_n|^4}|v_n|^4 dx\\\nonumber
& = \frac{b}{4} +C_8 - \liminf_{n\to\infty} \int_{\RR^3}\frac{|\tilde{F}(x,u_n)|}{|u_n|^4}[\chi_{\Omega_n(r_0,\infty)}(x)]|v_n|^4 dx\\\nonumber
& = C_9 - \int_{\RR^3}\liminf_{n\to\infty}\frac{|\tilde{F}(x,u_n)|}{|u_n|^4}[\chi_{\Omega_n(r_0,\infty)}(x)]|v_n|^4 dx \to - \infty \text{ as }n\to \infty,
\end{align}
which is a contradiction. Thus $\{ u_n \}$ is bounded in $H$. The proof is completed.
\end{proof}

\begin{lemma}\label{lemma2.6}
Assume that the conditions (V2) and (S1) hold. Then any bounded sequence $\{ u_n \}$ satisfying \eqref{eq:2.12} has a convergent subsequence in $H$.
\end{lemma}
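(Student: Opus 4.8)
The plan is to run the standard argument for the compactness of Cerami sequences, in which hypothesis (V2) enters decisively through the compact embedding recorded in Remark~\ref{remark2.1}. Since $\{u_n\}$ is bounded in the Hilbert space $H$, after passing to a subsequence I may assume $u_n \rightharpoonup u$ in $H$ for some $u \in H$; by Remark~\ref{remark2.1} this gives $u_n \to u$ strongly in $L^s(\RR^3)$ for every $s \in [2,6)$ and $u_n \to u$ a.e.\ on $\RR^3$. The aim is to upgrade this to $\|u_n - u\|_H \to 0$.

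I would test $I'(u_n) - I'(u)$ on $u_n - u$. From \eqref{eq:2.12} one has $\|I'(u_n)\|_{H^*} \to 0$, so $\langle I'(u_n), u_n - u\rangle \to 0$ because $\{\|u_n - u\|_H\}$ is bounded; moreover $u_n - u \rightharpoonup 0$ in $H$ and $I'(u) \in H^*$, whence $\langle I'(u), u_n - u\rangle \to 0$. Thus $\langle I'(u_n) - I'(u), u_n - u\rangle \to 0$. Writing $a_n := 1 + b\int_{\RR^3}|\nabla u_n|^2\,dx$ and $a := 1 + b\int_{\RR^3}|\nabla u|^2\,dx$, the gradient part coming from \eqref{eq:2.7} rearranges as
\[
a_n\int_{\RR^3}|\nabla(u_n - u)|^2\,dx + (a_n - a)\int_{\RR^3}\nabla u\cdot\nabla(u_n - u)\,dx,
\]
and since $\{u_n\}$ bounded in $H$ keeps $\{a_n\}$ bounded while $\nabla u_n \rightharpoonup \nabla u$ in $L^2(\RR^3)$, the cross term is $o(1)$. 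Recalling $\tilde V > 0$ by (V2), I therefore obtain
\[
a_n\int_{\RR^3}|\nabla(u_n - u)|^2\,dx + \int_{\RR^3}\tilde V(x)|u_n - u|^2\,dx = \langle I'(u_n) - I'(u), u_n - u\rangle + \int_{\RR^3}\bigl(\tilde f(x,u_n) - \tilde f(x,u)\bigr)(u_n - u)\,dx + o(1).
\]

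The crucial step — and the one that would fail on the whole space without (V2) — is to show that the nonlinear term on the right tends to $0$. I would split it as $\int\tilde f(x,u_n)(u_n - u)\,dx - \int\tilde f(x,u)(u_n - u)\,dx$. Using (S1) and Hölder's inequality, the first integral is bounded in absolute value by $c_1\|u_n\|_4^3\|u_n - u\|_4 + c_2\|u_n\|_q^{q-1}\|u_n - u\|_q$, which tends to $0$ since $\{u_n\}$ is bounded in $L^4(\RR^3) \cap L^q(\RR^3)$ (Sobolev embedding, as $4,q \in [2,6)$) while $\|u_n - u\|_4 + \|u_n - u\|_q \to 0$; the second integral tends to $0$ because $\tilde f(\cdot,u) \in L^{4/3}(\RR^3) + L^{q/(q-1)}(\RR^3)$ and $u_n - u \to 0$ in $L^4(\RR^3) \cap L^q(\RR^3)$. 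Hence the whole right-hand side is $o(1)$. Since $a_n \ge 1$ and the two terms on the left are nonnegative, this forces $\int_{\RR^3}|\nabla(u_n - u)|^2\,dx \to 0$ and $\int_{\RR^3}\tilde V(x)|u_n - u|^2\,dx \to 0$, i.e.\ $\|u_n - u\|_H^2 \to 0$, which is the assertion.

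The only points demanding a little care are the Kirchhoff coefficient — one must note that boundedness of $\{u_n\}$ in $H$ makes $\{a_n\}$ bounded, so that the term pairing $a_n - a$ with the weak limit is negligible, and that $a_n \ge 1$ so the gradient part of the norm survives the limit — and the passage to the limit in the nonlinear term, for which the compact Sobolev embedding of Remark~\ref{remark2.1}, itself a consequence of (V2), is indispensable; the remaining manipulations are routine.
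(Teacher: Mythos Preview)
Your proposal is correct and follows essentially the same approach as the paper: pass to a weakly convergent subsequence, use the compact embedding from Remark~\ref{remark2.1}, test $I'(u_n)-I'(u)$ on $u_n-u$, dispose of the Kirchhoff cross term via boundedness and weak convergence, and kill the nonlinear term with the strong $L^s$ convergence. The only cosmetic difference is that the paper handles $\int(\tilde f(x,u_n)-\tilde f(x,u))(u_n-u)\,dx\to 0$ by invoking the auxiliary Lemma~\ref{tang} (with $r=1$), whereas you do it directly via H\"older and (S1); both are equally valid.
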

\begin{proof}
Going if necessary to a subsequence, we may assume that $u_n \rightharpoonup u$ in $H$. Then by Remark \ref{remark2.1}, we have $v_n \to v$ in $L^s ( \RR^3 ) $, for $2 \leq s < 6$. Let us take $r \equiv 1$ in Lemma \ref{tang} and combine with $u_n \to u$ in $L^s ( \RR^3 )$ for $2 \leq s < 6$, to get
\begin{equation}\label{eq:2.21}
\lim_{n\to\infty}|\tilde{f}(x,u_n)-\tilde{f}(x,u)||u_n-u|dx \to 0, \text{ as }n\to\infty.
\end{equation}
We observe that
 \begin{equation}\label{eq1}
    \langle I' (u_n) - I'(u), u_n - u \rangle \to 0, \text{ as }n \to \infty,
 \end{equation}
and we have
\begin{align}\label{eq2.7}
& \langle I'(u_n) - I'(u), u_n - u \rangle \\\nonumber
= & \int_{\RR^3} \tilde{V}(x) |u_n - u|^2 dx + \left( 1 + b\int_{\RR^3} |\nabla u_n|^2 dx \right) \int_{\RR^3} \nabla u_n \cdot \nabla (u_n -u) dx \\\nonumber
& - \left( 1 + b\int_{\RR^3} |\nabla u|^2 dx \right) \int_{\RR^3} \nabla u \cdot \nabla (u_n -u) dx - \int_{\RR^3} [f(x,u_n) - f(x,u)] (u_n - u)dx\\\nonumber
= & \|u_n - u\|_H^2 + \left( 1 + b\int_{\RR^3} |\nabla u_n|^2 dx \right)  \int_{\RR^3} |\nabla (u_n -u)|^2 dx\\\nonumber
& - \left( \int_{\RR^3} |\nabla u|^2 dx - \int_{\RR^3} |\nabla u_n|^2 dx \right) \int_{\RR^3 } \nabla u \cdot \nabla (u_n -u) dx\\\nonumber
& - \int_{\RR^3} [f(x,u_n) - f(x,u)] (u_n - u)dx\\\nonumber
& \geq \|u_n - u\|_H^2 - b\left( \int_{\RR^3} |\nabla u|^2 dx - \int_{\RR^3} |\nabla u_n|^2 dx \right)\\\nonumber
& - \int_{\RR^3 } \nabla u \cdot \nabla (u_n -u) dx - \int_{\RR^3} [f(x,u_n) - f(x,u)] (u_n - u)dx.
\end{align}
Then \eqref{eq2.7} implies that
\begin{align}\label{eq2.8}
\|u_n - u\|_H^2 &  \leq  \langle I'(u_n) - I'(u), u_n - u \rangle\\\nonumber
& + b\left( \int_{\RR^3} |\nabla u|^2 dx - \int_{\RR^3} |\nabla u_n|^2 dx \right) \int_{\RR^3 } \nabla u \cdot \nabla (u_n -u) dx\\\nonumber
& + \int_{\RR^3} [f(x,u_n) - f(x,u)] (u_n - u)dx.
\end{align}
Define the functional $h_u : H \to \RR$ by
$$ h_u(v) = \int_{\RR^3} \nabla u \cdot \nabla v dx, \forall v \in H. $$
Obviously, $h_u$ is a linear functional on $H$. Furthermore,
$$ |h_u(v)| \leq \int_{\RR^3} |\nabla u \cdot \nabla v| dx \leq \|u\|_H \|v\|_H, $$
which implies that $h_u$ is bounded on $H$. Hence $h_u \in H^*$. Since $u_n \rightharpoonup u$ in $H$, we have $\lim_{n \to \infty}  h_u (u_n ) = h_u (u)$, that is,
$$ \int_{\RR^3} \nabla u \cdot \nabla (u_n - u)dx \to 0\quad \mbox{as $n \to \infty$}.$$
Consequently, by $v_n \to v$ in $L^s ( \RR^3 ) $, for $2 \leq s < 6$ and the boundedness of $\{u_n \}$, we obtain
\begin{equation}\label{eq2.9}
b\left( \int_{\RR^3} |\nabla u|^2 dx - \int_{\RR^3} |\nabla u_n|^2 dx \right) \int_{\RR^3 } \nabla u \cdot \nabla (u_n -u) dx \to 0, \quad n \to + \infty.
\end{equation}
Consequently, \eqref{eq:2.21}, \eqref{eq1}, \eqref{eq2.8}, \eqref{eq2.9} imply that
$$ u_n \to u \text{ in }H \text{ as }n \to \infty. $$
This completes the proof.
\end{proof}

\begin{proof}[Proof of Theorem \ref{thm1.1}]
To complete the proof of the main result, we need to consider the following two steps.

{\it Step 1.} We first show that there exists a function $u_0 \in H$ such that $I' ( u_0 ) = 0$ and $I ( u_0 ) < 0$. Let $r_0 = 1$. For any $| u | \geq 1$, from (S2), we have
\begin{equation}\label{eq:3.1}
\tilde{F}(x,u_n)\geq c_3|u|^\sigma > 0.
\end{equation}
By (S1), for a.e. $x \in \RR^3$ and $0 \leq | u | \leq 1$, there exists $M > 0$ such that
$$ \left| \frac{\tilde{f}(x,u)u}{u^2} \right| \leq \left| \frac{(c_1|u|^3+c_2|u|^{q-1})|u|}{|u|^2} \right| \leq M, $$
which implies that
$$ \tilde{f}(x,u)u \geq - M|u|^2. $$
Using the equality $\tilde{F}(x,u)=\int_0^1\tilde{f}(x,tu)dt$, for a.e. $x\in\RR^3$ and $0\leq|u|\leq1$, we obtain
\begin{equation}\label{eq:3.4}
\tilde{F}(x,u)> -\frac{1}{2}M|u|^2.
\end{equation}
In view of \eqref{eq:3.1} and \eqref{eq:3.4}, we have for a.e. $x \in \RR^3$ and all $u \in \RR$ that
$$ \tilde{F}(x,u) \geq -\frac{1}{2}M|u|^2+c_3|u|^\sigma. $$
Therefore we have
\begin{equation}\label{eq:3.6}
\tilde{F}(x,t\psi)\geq - \frac{1}{2}Mt^2|\psi|^2+t^\sigma c_3|\psi|^\sigma.
\end{equation}
Combing \eqref{eq:2.5} with \eqref{eq:3.6}, we get
\begin{align*}
I(tu)& = \frac{t^2}{2}\|u\|_H^2 + \frac{bt^4}{4}\left( \int_{\RR^3} |\nabla u|^2 dx \right)^2 - \int_{\RR^3}\tilde{F}(x,tu)dx\\
& \leq \frac{t^2}{2}\|u\|_H^2 + \frac{bt^4}{4}\left( \int_{\RR^3} |\nabla u|^2 dx \right)^2 + \frac{t^2M}{2}\int_{\RR^3}|u|^2 dx - t^\sigma c_3 \int_{\RR^3}|u|^\sigma dx.
\end{align*}
Since $\sigma \in ( 0 , 2 ) $, for  small enough $t$ we infer that $I ( tu ) < 0$. Thus we obtain
$$ c_0 = \inf\{I(u):u\in\bar{B}_\rho\}<0, $$
where $\rho > 0$ is given by Lemma \ref{lemma2.2} and $B_\rho = \{ u \in H : \| u \|_H < \rho\} $. By the Ekeland variational principle, there exists a sequence $\{ u_n \} \subset B_\rho$ such that
$$ c_0 \leq I(u_n) \leq c_0 + \frac{1}{n}, $$
and
$$ I(w)\geq I(u_n)-\frac{1}{n}\|w-u_n\|_H, $$
for all $w \in B_\rho $. Then, following the idea of \cite{MR1400007}, we can show that $\{u_n\}$ is a bounded Cerami sequence of $I$. Therefore, Lemma \ref{lemma2.6} implies that there exists a function $u_0 \in H$ such that $I' ( u_0 ) = 0$ and $I ( u_0 ) = c_0 < 0$.

{\it Step 2.} We now show that there exists a function $\tilde{u}_0 \in H$ such that $I'(\tilde{u}_0 ) = 0$ and $I (\tilde{u}_0 ) = \tilde{c}_0 > 0$. By Lemmata \ref{lemma2.2}, \ref{lemma2.3} and \ref{lemma2.1}, there is a sequence $\{ u_n \} \in H$ satisfying \eqref{eq:2.12}. Moreover, Lemma \ref{lemma2.4} and \ref{lemma2.6} shows that this sequence has a convergent subsequence and is bounded in $H$. So, we complete the Step 2.

Therefore, combining the above two steps and Lemma \ref{lemma1.1}, we complete the proof of Theorem \ref{thm1.1}.
\end{proof}

\section*{Acknowledgments}
 This research was supported by the Slovenian Research Agency grants P1-0292, J1-5435, and J1-6721, the Romanian National Authority for Scientific Research CNCS-UEFISCDI grant PN-II-ID-PCE-2011-3-0195, and the Research Fund of Chongqing Technology and Business University grant 2015-56-09-1552007. 

\footnotesize

\bibliographystyle{abbrv} 
\bibliography{kirchhoff}

\end{document}